\documentclass{ssl}
\usepackage{graphicx} 
\usepackage[toc,page]{appendix}
\usepackage[english]{babel}
\usepackage{amsmath, amsfonts, amsthm, amssymb}
\usepackage{array,booktabs, graphicx, tabularx, ,pdflscape, siunitx, makecell}

\title{Geometric Factorization of Sufficient Harmonic Representations}
\subtitle{Machine unlearning is defined by its performance, but this leaves another component of the model unexamined: its geometry.}
\author{Kennon Stewart\affilnum{1,2}}
\affiliation{\affilnum{1} Second Street Labs, Detroit, MI, USA\\
\affilnum{2} Department of Statistics, University of Michigan, Ann Arbor, MI, USA}
\corrauth{Kennon Stewart, Second Street Labs}
\email{kennon@secondstreetlabs.io}
\date{June 2026}
\version{v1}
\preprintnumber{2026-003}
\lablogo{ssl.png}
\keywords{representation theory, group theory, harmonic analysis}

\begin{document}
\begin{abstract}
For tasks of likelihood families invariant under the action of a lie group, the quotient is the minimal sufficient invariant representation. On compact homogeneous spaces, this quotient representation admits a harmonic realization through K-spherical Fourier coefficients; for finite-band harmonic exponential families, the empirical harmonic coefficients are minimal sufficient statistics. The partition function can be expressed algebraically by extracting the trivial representation component through Clebsch–Gordan decomposition.
\end{abstract}
\maketitle

\section{Introduction}

Representation learning studies how high-dimensional observations can be mapped to lower-dimensional coordinates that preserve the information needed for a task. This premise appears throughout modern machine learning: images, graphs, sequences, and physical measurements often contain repeated structure, nuisance variation, and symmetries that should not change the relevant prediction. A useful representation should therefore discard irrelevant variation while retaining the information required for inference.

Classical statistics gives a precise language for this goal. A statistic is sufficient for a parameter when it preserves all information in the data relevant to estimating that parameter. It is minimally sufficient when no strictly coarser statistic preserves the same information. In representation learning, the analogous question is geometric: when can a learned representation be said to contain exactly the task-relevant structure of the data, and no more?

This paper studies that question for data with group-structured symmetries. Suppose a group $G$ acts on a sample space $X$. Points in the same orbit represent different observations related by the symmetry action. If the task is invariant under this action, then these observations should not be distinguished by a minimal representation. The natural candidate is the quotient map $\pi:X\to X/G,$ which collapses each group orbit to a single equivalence class. We show that, under an invariant likelihood or conditional target law, this quotient is not merely an invariant representation: it is a sufficient representation. Under an additional orbit-separation condition, it is minimally sufficient.

This connects statistical sufficiency to geometric factorization. The quotient $X/G$ removes precisely the variation generated by the group action, while retaining the variation specifically relevant to the task at-hand. In this sense, minimal representation learning can be viewed as the problem of identifying and collapsing nuisance symmetries without destroying information needed for prediction.

We then specialize this idea to compact Lie groups and homogeneous spaces, where harmonic analysis provides canonical coordinates for group-structured data. By the Peter-Weyl theorem, square-integrable functions on a compact group decompose into matrix coefficients of irreducible unitary representations. For finite-band harmonic exponential families, these generalized Fourier coefficients become the natural sufficient statistics. We prove that, under the standard full-rank conditions for exponential families, the empirical generalized Fourier coefficients are minimally sufficient for the model parameters.

Finally, we study the normalization constant of these harmonic exponential families. Although finite-band log-densities are tractable to write down, their exponentials generally generate higher-order harmonics, making the partition function difficult to compute. We show that the Clebsch-Gordan decomposition expresses the normalization constant algebraically: after expanding the exponential, tensor products of irreducible representations decompose into direct sums, and Haar integration retains only the trivial representation component. Thus the partition function can be represented as the trivial-irrep projection of the exponential tensor algebra.

\subsection{Contributions}
The paper therefore makes three contributions. First, it formulates minimal sufficient representation learning as a quotient problem under group-invariant statistical structure. Second, it connects this quotient view to harmonic sufficient statistics on compact groups and homogeneous spaces. Third, it gives an algebraic representation of the harmonic exponential-family partition function using Clebsch-Gordan decomposition. Together, these results clarify when invariance produces compression, when equivariant structure remains necessary, and how minimal statistical sufficiency can be expressed in geometric and harmonic terms.

\section{Related Work}
\subsection{Exponential Families and Sufficient Statistics}
Exponential families provide the classical statistical setting in which sufficiency is most explicit. Given a dominated family of distributions $p_\theta(x)$, a statistic $T(X)$ is sufficient for $\theta$ if the likelihood depends on the sample only through $T(X)$ \cite{casella_statistical_2024, hipp_sufficient_1974}. In a full-rank exponential family, the empirical sum of the natural statistics is not only sufficient but minimally sufficient under standard regularity conditions. This makes exponential families a natural starting point for defining what it means to compress data without losing information relevant to an inferential task.

This paper builds on that idea but shifts the object of compression. Classical sufficiency begins with a specified likelihood and asks which statistic preserves information about the parameter. Representation learning often begins instead with high-dimensional observations whose relevant variation is structured by symmetries \cite{rosebrock_visual_2024}. In this setting, the central question is not only which statistic is sufficient for a parameter, but which symmetries remove the most nuisance variation while preserving task-specific structure. 

We formalize this connection by studying group actions on the sample space. If a group $G$ acts on $X$, then each orbit $Gx$ collects observations that differ only by the symmetry action. When the likelihood or conditional target law is invariant along these orbits, the quotient map $\pi:X\to X/G$ is a sufficient representation. Under an orbit-separation condition, it is minimally sufficient. Thus, the orbit space gives a geometric analogue of the minimal sufficient statistic: it is the coarsest representation that preserves the information relevant to the invariant task.

\subsection{Information Geometry}
Information geometry studies statistical models as manifolds whose points are probability distributions. In this view, learning takes place on a parametric manifold, and the Fisher information metric describes the local geometry of distinguishability among nearby distributions \cite{amari_information_2016, chirikjian_engineering_2000}. Chentsov's theorem gives a foundational invariance result: under suitable conditions \cite{dowty_chentsovs_2017, halverson_naturalness_2026}, the Fisher metric is the classical Riemannian structure preserved by sufficient statistic-like Markov morphisms.

Our perspective is complementary. Rather than beginning with the manifold of parametric distributions, we begin with the geometry of the sample space itself. When the sample space carries a group action, the relevant geometric operation is quotienting by the orbits of nuisance symmetries \cite{rosebrock_visual_2024,serre_linear_1977, sagan_symmetric_2001}. This converts a symmetry structure on data into a candidate sufficient representation. The resulting quotient $X/G$ is not a statistical manifold of distributions, but a reduced observation space whose points encode task-relevant equivalence classes.

This distinction is important for representation learning. In many machine learning settings, the parametric family is implicit, overparameterized, or difficult to interpret directly. The geometry of the data and the symmetries of the task may be more accessible than the geometry of the learned model's full parameter space. Our contribution is to connect these two views: under invariant statistical structure, geometric quotienting induces statistical sufficiency.

\subsection{Geometric Deep Learning and Invariant Representations}
Geometric deep learning studies learning problems with known or hypothesized symmetries. A representation is invariant if it is unchanged under a group action, and equivariant if it transforms predictably when the input is transformed \cite{rosebrock_visual_2024}. These principles have led to architectures for images, graphs, sets, manifolds, and physical systems, where respecting symmetry can improve generalization and reduce sample complexity.

Our work differs in emphasis. Much of geometric deep learning asks how to design architectures that respect a symmetry. We ask when respecting a symmetry produces a minimal sufficient representation. The answer is task-dependent. For invariant tasks, variation along group orbits is nuisance variation, and a minimal representation should collapse each orbit. For equivariant or reconstructive tasks, however, nontrivial group coordinates may be necessary and should not be treated as redundancy.

This distinction clarifies the role of irreducible representations. Decomposing a representation into irreducible components identifies the elementary harmonic modes through which the group acts \cite{greiner_quantum_1989, hall_lie_2015}. But irreducibility alone is not the same as minimal sufficiency. A nontrivial irreducible component may be redundant for an invariant classification task and essential for an equivariant prediction task. Minimality is therefore not a property of the representation alone; it is a property of the representation relative to the statistical task.

\subsection{Harmonic Analysis and Minimal Irreducible Coordinates}
For compact groups and homogeneous spaces, harmonic analysis provides canonical coordinates for functions on the space. The Peter-Weyl theorem decomposes square-integrable functions on a compact group into matrix coefficients of irreducible unitary representations \cite{kondor_clebsch-gordan_2018, donkin_clebsch-gordan_2020}. These matrix coefficients generalize ordinary Fourier modes to noncommutative groups.

This harmonic viewpoint connects directly to sufficiency for exponential families on compact groups. If a harmonic exponential family is defined by a finite set of irreducible representation coefficients \cite{cohen_harmonic_2015, tsipidi_harmonic_2025,sundararajan_fourier_2018}, then the corresponding empirical generalized Fourier coefficients are the natural sufficient statistics. Under full-rank conditions, these statistics are minimally sufficient for the model parameters. Thus, harmonic coefficients provide a finite-band statistical realization of the more general quotient-sufficiency principle.

The quotient and harmonic views play different roles. The quotient $X/G$ identifies the minimal invariant representation with respect to the sample space. The harmonic decomposition provides coordinates for representing functions, densities, and finite-band exponential families on compact groups or homogeneous spaces. Our aim is to connect these levels: orbit spaces describe what invariant representations should collapse, while irreducible harmonic coefficients describe the shape of those irreducibles.

\section{Theoretical Results}
This section develops the connection between minimal statistical sufficiency, orbit spaces, and harmonic coordinates on compact groups. We do this in four steps. First, we show that when a likelihood or prediction task is invariant under a group action, the orbit quotient is a sufficient representation. We then use standard statistical theory to define the conditions for minimal sufficiency. Second, we analyze compact Lie groups, where Peter-Weyl theory provides harmonic coordinates through irreducible unitary representations. Third, we show that for finite-band harmonic exponential families, empirical generalized Fourier coefficients are minimally sufficient statistics. Finally, we express the partition function of these families algebraically using Clebsch-Gordan decomposition.

\subsection{Minimal Sufficient Invariant Quotients}
Let $G$ be a compact group acting measurably on a sample space $\mathcal X$, and let
$$
\pi:\mathcal X\to \mathcal X/G
$$
denote the quotient map sending each point to its orbit. The quotient identifies observations that differ only by the group action.

\begin{theorem}[Sufficiency of the Orbit Quotient]
Let ${P_\theta:\theta\in\Theta}$ be a dominated statistical family on $\mathcal X$ with density $p_\theta(x)$. Suppose the likelihood is invariant along group orbits:
$$
p_\theta(gx)=p_\theta(x)
\quad
\text{for all } g\in G,\ x\in\mathcal X,\ \theta\in\Theta.
$$
Then $\pi(X)$ is sufficient for $\theta$.
\end{theorem}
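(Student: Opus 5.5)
The plan is to apply the Fisher--Neyman factorization criterion to the invariant density, reading $p_\theta$ as a function on orbits. Because $p_\theta(gx)=p_\theta(x)$ for all $g$, the map $x\mapsto p_\theta(x)$ is constant on each orbit $Gx$. It therefore descends to a function $\tilde p_\theta$ on $\mathcal X/G$ with $p_\theta = \tilde p_\theta\circ\pi$. Writing $p_\theta(x)=\tilde p_\theta(\pi(x))\cdot h(x)$ with $h\equiv 1$ gives a factorization of the required form. The Neyman criterion for dominated families then yields sufficiency of $\pi(X)$. For an i.i.d.\ sample, the same argument applied to the product density gives sufficiency of $(\pi(X_1),\dots,\pi(X_n))$.

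The step I expect to be the main obstacle is measurability of $\tilde p_\theta$ with respect to the quotient $\sigma$-algebra. I would equip $\mathcal X/G$ with the final $\sigma$-algebra $\{B:\pi^{-1}(B)\text{ measurable}\}$, which makes $\pi$ measurable by construction. The needed fact is that a measurable function on $\mathcal X$ that is constant on orbits, i.e.\ measurable with respect to the $G$-invariant $\sigma$-algebra $\mathcal I$, is the composite of $\pi$ with a measurable function on $\mathcal X/G$. With the final $\sigma$-algebra this is immediate, since $\sigma(\pi)=\mathcal I$ exactly: every invariant measurable set $A$ satisfies $A=\pi^{-1}(\pi(A))$, and $\pi(A)$ lies in the final $\sigma$-algebra. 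One subtlety is that the density is only defined up to null sets, so invariance might hold only almost everywhere. I would handle this in one of two ways. The first is to replace $p_\theta$ by its Haar average $\int_G p_\theta(gx)\,dg$, which uses compactness of $G$ and joint measurability of the action, and which agrees with $p_\theta$ wherever invariance holds. The second is simply to take the invariance hypothesis as stated, i.e.\ pointwise.

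As an independent sanity check, I would also verify sufficiency directly by exhibiting a $\theta$-free conditional distribution. The natural candidate, valid when the dominating measure $\mu$ is $G$-invariant, is $P_\theta(X\in\cdot\mid \pi(X)=[x]) = $ the normalized Haar pushforward of $g\mapsto gx$, i.e.\ the uniform law on the orbit. The verification goes as follows. For an invariant set $A$ and an arbitrary measurable $B$, compute $\int_A \mathbf 1_B\, p_\theta\,d\mu$. Average over $g$ using invariance of $\mu$ and of $p_\theta$, then apply Fubini, which gives $\int_A \big(\int_G \mathbf 1_B(gx)\,dg\big)p_\theta(x)\,d\mu(x)$. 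The inner kernel does not depend on $\theta$, which establishes sufficiency. This route needs invariance of $\mu$, so I would present the factorization argument as the main proof and this construction as the explicit description of the conditional law.
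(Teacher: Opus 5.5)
Your proposal is correct and follows essentially the same route as the paper: you descend the orbit-invariant density to a function $\tilde p_\theta$ on $\mathcal X/G$ and apply Fisher--Neyman factorization with $h\equiv 1$. Your extra points, namely measurability via the final $\sigma$-algebra and handling a.e.\ invariance by Haar averaging, fill in details the paper's proof skips, and the orbit-uniform conditional law is a valid additional check when the dominating measure is $G$-invariant.
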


\begin{proof}
Let \(T=\pi\). Since \(p_\theta(gx)=p_\theta(x)\) for all \(g\in G\), the density \(p_\theta\) is constant on each orbit of the group action.

Define \(q_\theta:\mathcal X/G\to \mathbb R\) by
\[
q_\theta([x]) = p_\theta(x),
\]
where \([x]=\pi(x)\) denotes the orbit of \(x\).

This definition is well-defined. This means that if \([x]=[x']\), then \(x'=gx\) for some \(g\in G\), and therefore
\[
p_\theta(x')=p_\theta(gx)=p_\theta(x).
\]
And so \(q_\theta([x])\) is independent of the representative chosen from the orbit.

Therefore,
\[
p_\theta(x)=q_\theta(\pi(x)),
\]
which we can also write as 
\[
p_\theta(x)=q_\theta(T(x))\cdot 1.
\]
This is basically Fisher-Neyman factorization with \(h(x)=1\), where all dependence on \(\theta\) occurs through \(T(x)=\pi(x)\). As such, \(T(X)=\pi(X)\) is sufficient for \(\theta\).
\end{proof}

\begin{theorem}[Minimal Sufficient Orbit Quotient]
Assume, in addition, that distinct orbits are statistically distinguishable:
$$
\pi(x)\neq \pi(x')
\quad\Longrightarrow\quad
\frac{p_\theta(x)}{p_\theta(x')}
\text{ is not constant in }\theta.
$$
Then $\pi(X)$ is minimal sufficient for $\theta$.
\end{theorem}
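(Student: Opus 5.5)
We will combine the sufficiency already established in the preceding theorem with the classical likelihood-ratio characterization of minimal sufficiency (Lehmann--Scheff\'e). That criterion says a statistic $T$ is minimal sufficient if, for all $x,x'$ in the support, the ratio $p_\theta(x)/p_\theta(x')$ being constant in $\theta$ implies $T(x)=T(x')$. Sufficiency of $\pi$ holds by the Sufficiency of the Orbit Quotient theorem. So only the minimality direction needs proof, and we prove it in its contrapositive form.

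For that step, take $x,x'$ with $p_\theta(x')>0$ for some $\theta$, and suppose $p_\theta(x)/p_\theta(x')$ does not depend on $\theta$. If $\pi(x)\neq\pi(x')$, the orbit-separation hypothesis says the ratio is not constant in $\theta$, which is a contradiction. Hence $\pi(x)=\pi(x')$.

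To be self-contained rather than cite the criterion, we would also show directly that $\pi$ factors through any sufficient statistic $S$. By Fisher--Neyman applied to $S$, write $p_\theta(x)=g_\theta(S(x))h(x)$. If $S(x)=S(x')$ and $h(x),h(x')>0$, then
\[
\frac{p_\theta(x)}{p_\theta(x')}=\frac{h(x)}{h(x')},
\]
which is constant in $\theta$. The previous step then gives $\pi(x)=\pi(x')$. So $\pi=\phi\circ S$ for a well-defined map $\phi$ on the image of $S$ restricted to the support. This is exactly the statement that $\pi(X)$ is a function of every sufficient statistic.

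The main obstacle is the measure-theoretic bookkeeping, which has three parts.
\begin{enumerate}
\item \textbf{Null sets.} The factorization holds only $P_\theta$-almost everywhere, so the implication ``$S(x)=S(x')\Rightarrow\pi(x)=\pi(x')$'' is obtained off a null set. Minimality is then read almost surely, as is standard.
\item \textbf{Support.} Points where $p_\theta(x')=0$ for all $\theta$ must be excluded. We interpret the hypothesis as applying on the common support, where the ratio is defined.
\item \textbf{Measurability of $\phi$.} We need $\phi$ to be measurable with respect to the quotient $\sigma$-algebra on $\mathcal X/G$. For a compact group acting measurably on a standard Borel space, the orbit space is countably separated, and we would invoke this to obtain measurability. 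Otherwise we state minimality in the sense of inclusion of induced $\sigma$-algebras modulo null sets.
\end{enumerate}
We expect the last point to need the most care. For the proof itself we would adopt the pointwise Lehmann--Scheff\'e formulation on the support and note these regularity conventions explicitly.
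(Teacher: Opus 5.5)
Your proof is correct and follows essentially the same route as the paper: both use the likelihood-ratio (Lehmann--Scheff\'e) characterization of minimal sufficiency, and in both the orbit-separation hypothesis supplies the key implication that a $\theta$-constant ratio forces $\pi(x)=\pi(x')$. The differences are only in packaging. You take sufficiency from the preceding theorem, use the one-sided form of the criterion, unpack that criterion via Fisher--Neyman, and treat null sets, supports, and measurability more explicitly, whereas the paper proves both directions of the equivalence (the forward one from orbit invariance, giving $p_\theta(x)/p_\theta(x')=1$) and simply cites the criterion.
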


\begin{proof}
By the likelihood-ratio characterization of minimal sufficiency, a statistic
\(T(X)\) is minimal sufficient if
\[
T(x)=T(x')
\quad\Longleftrightarrow\quad
\frac{p_\theta(x)}{p_\theta(x')}
\text{ is constant in }\theta,
\]
up to the usual qualifications on null sets and points where the denominator
vanishes.

Let \(T=\pi\). Suppose first that \(\pi(x)=\pi(x')\). Then \(x\) and \(x'\)
belong to the same orbit, so there exists \(g\in G\) such that \(x'=gx\).
By orbit invariance,
\[
p_\theta(x')=p_\theta(gx)=p_\theta(x)
\]
for all \(\theta\in\Theta\). Hence
\[
\frac{p_\theta(x)}{p_\theta(x')}=1,
\]
which is constant in \(\theta\).

Conversely, suppose that
\[
\frac{p_\theta(x)}{p_\theta(x')}
\]
is constant in \(\theta\). By the orbit-separation assumption, distinct orbits
cannot have a likelihood ratio that is constant in \(\theta\). Therefore it
cannot be the case that \(\pi(x)\neq \pi(x')\). Hence
\[
\pi(x)=\pi(x').
\]

Thus
\[
\pi(x)=\pi(x')
\quad\Longleftrightarrow\quad
\frac{p_\theta(x)}{p_\theta(x')}
\text{ is constant in }\theta.
\]
By the likelihood-ratio characterization of minimal sufficiency, \(\pi(X)\)
is minimal sufficient for \(\theta\).
\end{proof}

This result formalizes the sense in which an orbit space is a minimal invariant representation. If the task cannot distinguish points within the same orbit, then orbit-level information is sufficient. If distinct orbits remain statistically distinguishable, then no coarser invariant representation is sufficient.

\subsection{Generalized Fourier Statistics on Compact Groups}

We now specialize to the case where the sample space is a compact Lie group $G$ equipped with normalized Haar measure $dg$. Let $X_1,\dots,X_n$ be i.i.d. random variables taking values in $G$, with density $p\in L^2(G)$.

Let $\widehat G$ denote the set of equivalence classes of irreducible unitary representations of $G$. For $\lambda\in\widehat G$, let
$$
\rho_\lambda:G\to U(d_\lambda)
$$
be an irreducible unitary representation of dimension $d_\lambda$. The generalized Fourier coefficient of $p$ at frequency $\lambda$ is
$$
\widehat p(\lambda)
=
\int_G p(g)\rho_\lambda(g)^\dagger,dg.
$$
Given observations $X_1,\dots,X_n$, its empirical estimate is
$$
\widehat p_{\mathrm{emp}}(\lambda)
=
\frac{1}{n}\sum_{i=1}^n \rho_\lambda(X_i)^\dagger.
$$

For a finite subset $\widehat G_N\subset\widehat G$, define the Peter-Weyl projection
$$
(\mathcal T_Np)(g)
=
\sum_{\lambda\in\widehat G_N}
d_\lambda
\operatorname{Tr}!\left(
\widehat p(\lambda)\rho_\lambda(g)
\right).
$$
This is a finite-band approximation of $p$, retaining only the harmonic components indexed by $\widehat G_N$.

\subsection{Minimal Sufficiency of Finite-Band Harmonic Statistics}

Consider the finite-band harmonic exponential family
$$
p_\theta(g)
=
\exp\left[
E_\theta(g)-A(\theta)
\right],
$$
where
$$
E_\theta(g)
=
\sum_{\lambda\in\widehat G_N}
d_\lambda
\operatorname{Re}
\operatorname{Tr}
\left(
C_\lambda^\dagger\rho_\lambda(g)
\right),
$$
and $\theta={C_\lambda}{\lambda\in\widehat G_N}$.

\begin{theorem}[Minimal Sufficiency of Harmonic Statistics]
Suppose the finite-band harmonic exponential family above is full rank: after choosing real coordinates for the matrix coefficients, the natural statistic has no nontrivial affine dependence almost surely, and the natural parameter space contains an open set. Then, for i.i.d. observations $X_1,\dots,X_n\in G$, the statistic
$$
T_N(X_1,\dots,X_n)
=
\left(
\sum{i=1}^n \rho_\lambda(X_i)^\dagger
\right){\lambda\in\widehat G_N}
$$
is minimal sufficient for $\theta$.
\end{theorem}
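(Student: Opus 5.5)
The plan is to reduce the statement to the classical minimal sufficiency theorem for full-rank exponential families, using the same likelihood-ratio characterization invoked in the proof of the Minimal Sufficient Orbit Quotient theorem.

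\textbf{Step 1: write $E_\theta$ as a real inner product.} For each $\lambda\in\widehat G_N$ we have
\[
\operatorname{Re}\operatorname{Tr}\left(C_\lambda^\dagger\rho_\lambda(g)\right)=\langle C_\lambda,\rho_\lambda(g)\rangle_{\mathbb R},
\]
where $\langle A,B\rangle_{\mathbb R}=\operatorname{Re}\operatorname{Tr}(A^\dagger B)$ is the Frobenius real inner product on $\mathbb C^{d_\lambda\times d_\lambda}\cong\mathbb R^{2d_\lambda^2}$. Fix real coordinates and let $\eta(\theta)$ be the real vector stacking $d_\lambda C_\lambda$ over $\lambda\in\widehat G_N$. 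Let $S(g)$ be the vector stacking the real and imaginary parts of $\rho_\lambda(g)$. Then $E_\theta(g)=\eta(\theta)^\top S(g)$, so the model is a genuine exponential family with natural parameter $\eta$ and natural statistic $S$.

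The map $\rho_\lambda(g)\mapsto\rho_\lambda(g)^\dagger$ is a fixed real-linear bijection. So $S(g)$ and $(\rho_\lambda(g)^\dagger)_\lambda$ determine each other, and it suffices to work with $\sum_i S(X_i)$.

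\textbf{Step 2: sufficiency.} The joint density of $n$ i.i.d. observations is
\[
\prod_i p_\theta(X_i)=\exp\!\left[\eta(\theta)^\top \sum_i S(X_i)-nA(\theta)\right].
\]
This factors with $h\equiv1$ exactly as in the proof of the Sufficiency of the Orbit Quotient theorem. Fisher–Neyman factorization then gives sufficiency of $\sum_i S(X_i)$, and hence of $T_N$.

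\textbf{Step 3: minimality.} Use the likelihood-ratio criterion: show that the ratio
\[
\frac{\prod_i p_\theta(x_i)}{\prod_i p_\theta(x'_i)}=\exp\!\left[\eta^\top\big(\textstyle\sum_i S(x_i)-\sum_i S(x'_i)\big)\right]
\]
is constant in $\theta$ only if $T_N(x)=T_N(x')$. Write $\Delta=\sum_i S(x_i)-\sum_i S(x'_i)$. If the ratio is constant, then $\eta^\top\Delta$ is constant on the natural parameter space. By the full-rank hypothesis, that space contains an open set $U$.

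Take $\eta_0\in U$ and $\eta_0+\epsilon e_j\in U$ for each coordinate direction $e_j$. Subtracting gives $\epsilon\,e_j^\top\Delta=0$ for every $j$, hence $\Delta=0$, i.e. $T_N(x)=T_N(x')$. The converse direction is immediate: if $T_N(x)=T_N(x')$ then $\Delta=0$ and the ratio equals $1$.

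\textbf{Main obstacle: the parametrization.} The delicate step is checking that $\theta=\{C_\lambda\}$ really parametrizes an open set of natural parameters, without degeneracy. Two sources of trouble need attention.

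\emph{Redundant real coordinates.} When $\rho_\lambda$ is of real or quaternionic type, or when $\lambda$ and its dual $\bar\lambda$ both lie in $\widehat G_N$, the coordinates of $S$ satisfy linear relations. The trivial representation contributes a constant coordinate. In these cases the parameter space is not open in the full $\mathbb R^{\sum 2d_\lambda^2}$.

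\emph{Remedy.} Pass to a maximal linearly independent subvector $\tilde S$ of the coordinates of $S$. The assumption of no nontrivial affine dependence almost surely is precisely what allows this. Then $\tilde S$ is an injective linear image of $T_N$ modulo known constants, and the argument in Step 3 is run with $\tilde S$ in place of $S$.

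I expect this bookkeeping to be the only nontrivial point. Once it is handled, the result is the standard theorem for full-rank exponential families.
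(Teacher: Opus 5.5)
Your proof is correct and follows essentially the same route as the paper. Sufficiency comes from Fisher--Neyman factorization of the i.i.d.\ likelihood through $\sum_i\rho_\lambda(X_i)$. Minimality comes from the likelihood-ratio criterion: a linear functional $\eta\mapsto\eta^\top\Delta$ that is constant on an open set of natural parameters forces $\Delta=0$, and your coordinate-direction argument simply makes the paper's final step explicit.

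One remark: the reduction to $\tilde S$ is harmless but unnecessary, and your claim that the parameter space is not open in the full $\mathbb R^{\sum_\lambda 2d_\lambda^2}$ is mistaken. Because $G$ is compact, the partition function is finite for every $\theta=\{C_\lambda\}$, so the parameter set is all of $\mathbb R^{\sum_\lambda 2d_\lambda^2}$ and Step 3 applies verbatim to $S$. Linear redundancies among the coordinates of $S$ cost identifiability of $\theta$, not minimal sufficiency of $T_N$.
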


\begin{proof}
For i.i.d. observations \(X_1,\dots,X_n\in G\), the joint likelihood is
\[
L(\theta\mid X_1,\dots,X_n)
=
\prod_{i=1}^n p_\theta(X_i)
=
\exp\left[
\sum_{i=1}^n E_\theta(X_i)-nA(\theta)
\right].
\]
Substituting the finite-band harmonic energy gives
\[
\sum_{i=1}^n E_\theta(X_i)
=
\sum_{i=1}^n
\sum_{\lambda\in\widehat G_N}
d_\lambda
\operatorname{Re}
\operatorname{Tr}
\left(
C_\lambda^\dagger \rho_\lambda(X_i)
\right).
\]
Since the index set \(\widehat G_N\) is finite, we may interchange the sums and use
linearity of the trace:
\[
\sum_{i=1}^n E_\theta(X_i)
=
\sum_{\lambda\in\widehat G_N}
d_\lambda
\operatorname{Re}
\operatorname{Tr}
\left(
C_\lambda^\dagger
\sum_{i=1}^n \rho_\lambda(X_i)
\right).
\]
Therefore
\[
L(\theta\mid X_1,\dots,X_n)
=
\exp\left[
\sum_{\lambda\in\widehat G_N}
d_\lambda
\operatorname{Re}
\operatorname{Tr}
\left(
C_\lambda^\dagger T_{N,\lambda}(X)
\right)
-
nA(\theta)
\right],
\]
where
\[
T_{N,\lambda}(X)=\sum_{i=1}^n \rho_\lambda(X_i).
\]
Thus the likelihood depends on the sample only through
\[
T_N(X)=
\left(
T_{N,\lambda}(X)
\right)_{\lambda\in\widehat G_N},
\]
and the Fisher-Neyman factorization theorem implies that \(T_N\) is sufficient.

It remains to show minimality. Let \(X=(X_1,\dots,X_n)\) and
\(Y=(Y_1,\dots,Y_n)\). The likelihood ratio is
\[
\frac{L(\theta\mid X)}{L(\theta\mid Y)}
=
\exp\left[
\sum_{\lambda\in\widehat G_N}
d_\lambda
\operatorname{Re}
\operatorname{Tr}
\left(
C_\lambda^\dagger
\left[
T_{N,\lambda}(X)-T_{N,\lambda}(Y)
\right]
\right)
\right],
\]
since the terms \(nA(\theta)\) cancel.

If \(T_N(X)=T_N(Y)\), then the exponent is zero and the likelihood ratio is
equal to \(1\), hence constant in \(\theta\).

Conversely, suppose the likelihood ratio is constant in \(\theta\). Then
\[
\sum_{\lambda\in\widehat G_N}
d_\lambda
\operatorname{Re}
\operatorname{Tr}
\left(
C_\lambda^\dagger
\left[
T_{N,\lambda}(X)-T_{N,\lambda}(Y)
\right]
\right)
\]
is constant as a function of the natural parameters
\((C_\lambda)_{\lambda\in\widehat G_N}\). Because the natural parameter space
contains an open set and the family is full rank after choosing real
coordinates, this is possible only if
\[
T_{N,\lambda}(X)-T_{N,\lambda}(Y)=0
\quad
\text{for all }
\lambda\in\widehat G_N.
\]
Hence \(T_N(X)=T_N(Y)\).

Therefore
\[
T_N(X)=T_N(Y)
\quad\Longleftrightarrow\quad
\frac{L(\theta\mid X)}{L(\theta\mid Y)}
\text{ is constant in }\theta.
\]
By the likelihood-ratio characterization of minimal sufficiency,
\(T_N\) is minimal sufficient.
\end{proof}

\subsection{Algebraic Normalization by Clebsch-Gordan Decomposition}

The partition function is
$$
e^{A(\theta)}
=
\int_G e^{E_\theta(g)},dg.
$$
Although $E_\theta$ is finite-band, $e^{E_\theta}$ generally is not. The following result expresses the partition function as an exact algebraic expansion.

\begin{theorem}[Clebsch-Gordan Normalization]
For the finite-band harmonic exponential family above,
$$
e^{A(\theta)}
=
\sum_{k=0}^{\infty}
\frac{1}{k!}
\operatorname{Coeff}{\mathbf 1}
\left(E\theta^k\right),
$$
where $\operatorname{Coeff}{\mathbf 1}(E\theta^k)$ is the coefficient of the trivial representation in the Peter-Weyl expansion of $E_\theta(g)^k$, obtained by decomposing tensor products of irreducible representations using Clebsch-Gordan rules.
\end{theorem}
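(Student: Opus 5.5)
We expand the exponential as a power series and integrate term by term against normalized Haar measure:
\[
e^{A(\theta)}=\int_G \sum_{k=0}^\infty \frac{E_\theta(g)^k}{k!}\,dg .
\]
The key observation is that for any $f\in L^2(G)$ with Peter--Weyl expansion $f(g)=\sum_\lambda d_\lambda \operatorname{Tr}(\widehat f(\lambda)\rho_\lambda(g))$, the Haar integral $\int_G f\,dg$ equals the coefficient $\widehat f(\mathbf 1)$ at the trivial representation. This follows from the Schur orthogonality relations, since $\int_G \rho_\lambda(g)\,dg=0$ for every nontrivial irreducible $\lambda$. Thus $\int_G E_\theta^k\,dg=\operatorname{Coeff}_{\mathbf 1}(E_\theta^k)$, and the displayed identity follows once the interchange of sum and integral is justified.

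\textbf{Justifying the interchange.} Because $G$ is compact and each $\rho_\lambda$ is unitary, $|\operatorname{Tr}(C_\lambda^\dagger\rho_\lambda(g))|\le \|C_\lambda\|_{1}$ (trace norm). Hence
\[
\|E_\theta\|_\infty\le M:=\sum_{\lambda\in\widehat G_N} d_\lambda\|C_\lambda\|_1<\infty,
\]
the sum being finite because $\widehat G_N$ is finite. The partial sums of the exponential series are then dominated by $e^{M}$, which is Haar-integrable since the Haar measure is normalized. Dominated convergence, or simply uniform convergence on a probability space, justifies exchanging $\sum_k$ and $\int_G$. The same bound gives $\sum_k |\operatorname{Coeff}_{\mathbf 1}(E_\theta^k)|/k!\le e^M$, so the resulting series converges absolutely.

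\textbf{Connecting to Clebsch--Gordan.} It remains to check that $\operatorname{Coeff}_{\mathbf 1}(E_\theta^k)$ is the quantity produced by Clebsch--Gordan decomposition. We write $\operatorname{Re}\operatorname{Tr}(C^\dagger\rho_\lambda)=\tfrac12[\operatorname{Tr}(C^\dagger\rho_\lambda)+\operatorname{Tr}(C^T\overline{\rho_\lambda})]$, so that $E_\theta$ is a finite linear combination of matrix coefficients of the $\rho_\lambda$ and their conjugates $\overline{\rho_\lambda}\cong\rho_{\lambda^*}$. The power $E_\theta^k$ is then a finite linear combination of products of matrix coefficients. Each product of $k$ matrix coefficients is a matrix coefficient of the tensor product $\rho_{\lambda_1}\otimes\cdots\otimes\rho_{\lambda_k}$, evaluated at $g$. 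By iterating the Clebsch--Gordan decomposition
\[
\rho_{\lambda}\otimes\rho_{\mu}\cong U\Big(\bigoplus_\nu m_{\lambda\mu}^\nu\rho_\nu\Big)U^\dagger,
\]
this tensor product becomes a block-diagonal sum of irreducibles conjugated by a fixed unitary. Integrating over $G$ annihilates every nontrivial block and leaves the identity on the isotypic component of $\mathbf 1$. Equivalently, $\int_G \rho_{\lambda_1}\otimes\cdots\otimes\rho_{\lambda_k}\,dg$ is the orthogonal projector onto the invariant subspace, which is expressed through the Clebsch--Gordan coefficients. Contracting this projector with $C_{\lambda_1}^\dagger\otimes\cdots\otimes C_{\lambda_k}^\dagger$ gives the explicit trivial coefficient. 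Since the decomposition is an identity of representations, this agrees with the trivial Peter--Weyl coefficient of $E_\theta^k$, so the two descriptions coincide.

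The main obstacle is this last step. One must make precise how the trivial coefficient of a product of functions is computed from tensor-product representations, and keep track of the conjugate representations and multiplicities that arise from the real part. We would first prove the claim for monomials, using the identity $\int_G\bigotimes_j\rho_{\lambda_j}\,dg=P_{\mathrm{inv}}$ and the Clebsch--Gordan intertwiners, and then extend to $E_\theta^k$ by multilinearity.
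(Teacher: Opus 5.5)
Your proposal is correct and follows essentially the same route as the paper. You bound $E_\theta$ on compact $G$ to justify integrating the exponential series term by term, then identify $\int_G E_\theta^k\,dg$ with the trivial Peter--Weyl coefficient via orthogonality, with Clebsch--Gordan decomposition of the tensor products supplying the computation of that coefficient. Your explicit bound $M=\sum_\lambda d_\lambda\|C_\lambda\|_1$ and your handling of the real part through conjugate representations are slightly more careful than the paper's version. The step you flag as the main obstacle is in fact already settled: the trivial coefficient of $E_\theta^k$ is by definition $\int_G E_\theta^k\,dg$, and Peter--Weyl coefficients are unique, so any correct Clebsch--Gordan computation must return that same value.
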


\begin{proof}
Since \(G\) is compact and \(E_\theta\) is continuous, \(E_\theta\) is bounded.
Hence the exponential series
\[
e^{E_\theta(g)}
=
\sum_{k=0}^{\infty}
\frac{E_\theta(g)^k}{k!}
\]
converges uniformly on \(G\). Therefore we may interchange summation and
integration:
\[
e^{A(\theta)}
=
\int_G e^{E_\theta(g)}\,dg
=
\sum_{k=0}^{\infty}
\frac{1}{k!}
\int_G E_\theta(g)^k\,dg.
\]

For each power, we then need to identify the integral. The energy \(E_\theta\) is a
finite linear combination of matrix coefficients of irreducible unitary
representations. Therefore \(E_\theta^k\) is a finite linear combination of
products of \(k\) such matrix coefficients. Each such product is a matrix
coefficient of the tensor-product representation
\[
\rho_{\lambda_1}\otimes\cdots\otimes\rho_{\lambda_k}.
\]
By the Clebsch-Gordan decomposition, this tensor product decomposes as a
finite direct sum of irreducible representations:
\[
\rho_{\lambda_1}\otimes\cdots\otimes\rho_{\lambda_k}
\cong
\bigoplus_{\nu\in\widehat G}
N_{\lambda_1,\dots,\lambda_k}^{\nu}\rho_\nu.
\]
Thus \(E_\theta^k\) admits a Peter-Weyl expansion into irreducible components.

Haar integration is the projection onto the invariant subspace. Equivalently,
by Peter-Weyl orthogonality, the integral of every nontrivial irreducible
matrix coefficient vanishes, while the coefficient of the trivial
representation is preserved. Hence
\[
\int_G E_\theta(g)^k\,dg
=
\operatorname{Coeff}_{\mathbf 1}(E_\theta^k).
\]
Substituting this identity into the exponential expansion yields
\[
e^{A(\theta)}
=
\sum_{k=0}^{\infty}
\frac{1}{k!}
\operatorname{Coeff}_{\mathbf 1}(E_\theta^k).
\]
\end{proof}

This theorem replaces continuous integration over $G$ with extraction of the invariant component in the tensor algebra generated by the finite set of harmonics. It is an exact representation, though practical computation requires truncating the infinite series and controlling the resulting error.

\subsection{Peter-Weyl Recovery}

\begin{theorem}[Harmonic Recovery]
Let $p\in L^2(G)$. The algebraic span of the matrix coefficients of irreducible unitary representations of $G$ is dense in $L^2(G)$. Therefore, for any sequence of finite subsets $\widehat G_N$ exhausting $\widehat G$,
$$
\lim_{N\to\infty}
|p-\mathcal T_Np|_{L^2(G)}
=
0.
$$
\end{theorem}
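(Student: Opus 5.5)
The plan is to realize $\mathcal T_N$ as an orthogonal projection in $L^2(G)$ and reduce the limit to the density statement (Peter–Weyl). Write $\mathcal M_\lambda$ for the span of the matrix coefficients $g\mapsto\rho_\lambda(g)_{ij}$. By Schur orthogonality,
\[
\int_G \rho_\lambda(g)_{ij}\,\overline{\rho_\mu(g)_{kl}}\,dg=\frac{\delta_{\lambda\mu}\delta_{ik}\delta_{jl}}{d_\lambda}.
\]
Hence the functions $\sqrt{d_\lambda}\,\rho_\lambda(g)_{ij}$ form an orthonormal system, and the spaces $\mathcal M_\lambda$ are mutually orthogonal. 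Expanding the trace,
\[
d_\lambda\operatorname{Tr}\bigl(\widehat p(\lambda)\rho_\lambda(g)\bigr)=\sum_{i,j} d_\lambda\,\langle p,\rho_\lambda{}_{ij}\rangle\,\rho_\lambda(g)_{ij}.
\]
This is exactly the orthogonal projection $P_\lambda p$ of $p$ onto $\mathcal M_\lambda$. Here we use $\widehat p(\lambda)_{ji}=\int p\,\overline{\rho_\lambda(g)_{ij}}\,dg$, which follows from $\rho^\dagger$. Summing over the finite set $\widehat G_N$, we get that $\mathcal T_N=\sum_{\lambda\in\widehat G_N}P_\lambda$ is the orthogonal projection onto $V_N=\bigoplus_{\lambda\in\widehat G_N}\mathcal M_\lambda$.

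Next I would invoke the density claim, which is the Peter–Weyl theorem. I would cite it rather than reprove it. If a self-contained argument is wanted, the route is as follows. Take an approximate identity $\phi_\epsilon$ that is continuous, class-invariant, and symmetric. Convolution $f\mapsto\phi_\epsilon*f$ is then a compact self-adjoint operator commuting with right translations. Its nonzero eigenspaces are finite-dimensional and translation-invariant, so they consist of matrix coefficients. The spectral theorem then gives that $\phi_\epsilon*f$ is an $L^2$ limit of matrix coefficients, and $\phi_\epsilon*f\to f$. This step is the only substantive one, and the main obstacle is its analytic content: compactness of the convolution operator, and the identification of finite-dimensional invariant subspaces with spans of matrix coefficients. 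Here I would simply appeal to the standard theorem.

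Density together with orthogonality gives $L^2(G)=\widehat{\bigoplus}_{\lambda\in\widehat G}\mathcal M_\lambda$. This is because the orthogonal complement of the algebraic span is zero. The Parseval identity then reads
\[
\|p\|^2=\sum_\lambda\|P_\lambda p\|^2<\infty,
\]
so
\[
\|p-\mathcal T_Np\|^2=\sum_{\lambda\notin\widehat G_N}\|P_\lambda p\|^2 .
\]
The space $\widehat G$ is countable because $L^2(G)$ is separable for a compact Lie group. I would read ``exhausting'' as follows: every finite set $F\subset\widehat G$ is contained in $\widehat G_N$ for all large $N$. Then, given $\varepsilon>0$, choose a finite $F$ with tail $\sum_{\lambda\notin F}\|P_\lambda p\|^2<\varepsilon$. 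For $N$ large, $\widehat G_N\supseteq F$, so the remainder is below $\varepsilon$. This proves the limit.

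If ``exhausting'' only means $\bigcup_N\widehat G_N=\widehat G$ without nesting, I would need to add monotonicity of $\widehat G_N$. Alternatively, I would note that each fixed finite $F$ is eventually contained, which is what the argument uses.
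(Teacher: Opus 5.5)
Your proposal is correct and takes the same route as the paper, which gives no separate proof and obtains the limit from Peter--Weyl density through the ``Therefore'' in the statement. Two of your steps are exactly what that ``Therefore'' leaves implicit: identifying $\mathcal T_N$ as the orthogonal projection onto $\bigoplus_{\lambda\in\widehat G_N}\mathcal M_\lambda$ (via Schur orthogonality with the $d_\lambda$ normalization), and reading ``exhausting'' as eventual containment of every finite subset of $\widehat G$; the latter is genuinely needed, since for non-nested choices such as $\widehat G_N=\{\lambda_N\}$ one gets $\|p-\mathcal T_Np\|_{L^2(G)}\to\|p\|_{L^2(G)}$ rather than $0$.
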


This result establishes harmonic completeness. It shows that no $L^2$ information is lost in the infinite-band limit. The sufficiency result above is instead a finite-band statement about a specified exponential family.

\section{Discussion}
This paper connects classical sufficiency, group quotients, and harmonic representation theory. Its central claim is that minimal sufficient representation learning can be understood as geometric factorization: when the task is invariant under a group action, the orbit quotient is the natural minimal sufficient invariant. For compact groups and homogeneous spaces, this quotient structure admits harmonic coordinates through irreducible representations. In finite-band harmonic exponential families, the empirical generalized Fourier coefficients become minimally sufficient statistics, while Clebsch-Gordan decomposition gives an algebraic representation of the partition function through the trivial representation component.

\bibliographystyle{plain}
\bibliography{references}

@book{casella_statistical_2024,
    address = {Boca Raton},
    edition = {2},
    title = {Statistical {Inference}},
    isbn = {9781003456285},
    url = {https://www.taylorfrancis.com/books/9781003456285},
    language = {en},
    urldate = {2025-12-27},
    publisher = {Chapman and Hall/CRC},
    author = {Casella, George and Berger, Roger},
    month = apr,
    year = {2024},
    doi = {10.1201/9781003456285},
}

@book{amari_information_2016,
    address = {Tokyo},
    series = {Applied {Mathematical} {Sciences}},
    title = {Information {Geometry} and {Its} {Applications}},
    volume = {194},
    copyright = {https://www.springernature.com/gp/researchers/text-and-data-mining},
    isbn = {9784431559771 9784431559788},
    url = {https://link.springer.com/10.1007/978-4-431-55978-8},
    language = {en},
    urldate = {2026-06-04},
    publisher = {Springer Japan},
    author = {Amari, Shun-ichi},
    year = {2016},
    doi = {10.1007/978-4-431-55978-8},
}

@book{chirikjian_engineering_2000,
    edition = {0},
    title = {Engineering {Applications} of {Noncommutative} {Harmonic} {Analysis}: {With} {Emphasis} on {Rotation} and {Motion} {Groups}},
    isbn = {9780429123511},
    shorttitle = {Engineering {Applications} of {Noncommutative} {Harmonic} {Analysis}},
    url = {https://www.taylorfrancis.com/books/9781420041767},
    language = {en},
    urldate = {2026-06-04},
    publisher = {CRC Press},
    author = {Chirikjian, Gregory S.},
    month = sep,
    year = {2000},
    doi = {10.1201/9781420041767},
}

@article{cohen_harmonic_2015,
    title = {Harmonic {Exponential} {Families} on {Manifolds}},
    copyright = {arXiv.org perpetual, non-exclusive license},
    url = {https://arxiv.org/abs/1505.04413},
    doi = {10.48550/ARXIV.1505.04413},
    urldate = {2026-06-04},
    author = {Cohen, Taco S. and Welling, Max},
    year = {2015},
}

@article{hipp_sufficient_1974,
    title = {Sufficient {Statistics} and {Exponential} {Families}},
    volume = {2},
    issn = {0090-5364, 2168-8966},
    url = {https://projecteuclid.org/journals/annals-of-statistics/volume-2/issue-6/Sufficient-Statistics-and-Exponential-Families/10.1214/aos/1176342879.full},
    doi = {10.1214/aos/1176342879},
    language = {en},
    number = {6},
    urldate = {2026-06-04},
    journal = {The Annals of Statistics},
    author = {Hipp, Christian},
    month = nov,
    year = {1974},
    pages = {1283--1292},
}

@misc{dowty_chentsovs_2017,
    title = {Chentsov's theorem for exponential families},
    url = {https://arxiv.org/abs/1701.08895v2},
    language = {en},
    urldate = {2026-06-04},
    journal = {arXiv.org},
    author = {Dowty, James G.},
    month = jan,
    year = {2017},
}

@misc{halverson_naturalness_2026,
    title = {Naturalness and {Fisher} {Information}},
    url = {http://arxiv.org/abs/2603.01411},
    doi = {10.48550/arXiv.2603.01411},
    urldate = {2026-06-04},
    publisher = {arXiv},
    author = {Halverson, James and Harvey, Thomas R. and Nee, Michael},
    month = may,
    year = {2026},
    note = {arXiv:2603.01411},
}

@book{rosebrock_visual_2024,
    address = {Berlin, Heidelberg},
    series = {Springer {Undergraduate} {Mathematics} {Series}},
    title = {Visual {Group} {Theory}: {A} {Computer}-{Oriented} {Geometric} {Introduction}},
    copyright = {https://www.springernature.com/gp/researchers/text-and-data-mining},
    isbn = {9783662693643 9783662693650},
    shorttitle = {Visual {Group} {Theory}},
    url = {https://link.springer.com/10.1007/978-3-662-69365-0},
    language = {en},
    urldate = {2026-06-04},
    publisher = {Springer Berlin Heidelberg},
    author = {Rosebrock, Stephan},
    year = {2024},
    doi = {10.1007/978-3-662-69365-0},
}

@book{serre_linear_1977,
    address = {New York, NY},
    series = {Graduate {Texts} in {Mathematics}},
    title = {Linear {Representations} of {Finite} {Groups}},
    volume = {42},
    copyright = {https://www.springernature.com/gp/researchers/text-and-data-mining},
    isbn = {9781468494600 9781468494587},
    url = {https://link.springer.com/10.1007/978-1-4684-9458-7},
    language = {en},
    urldate = {2026-06-03},
    publisher = {Springer},
    author = {Serre, Jean-Pierre},
    year = {1977},
    doi = {10.1007/978-1-4684-9458-7},
}

@book{sagan_symmetric_2001,
    address = {New York, NY},
    series = {Graduate {Texts} in {Mathematics}},
    title = {The {Symmetric} {Group}},
    volume = {203},
    copyright = {http://www.springer.com/tdm},
    isbn = {9781441928696 9781475768046},
    url = {http://link.springer.com/10.1007/978-1-4757-6804-6},
    urldate = {2026-06-04},
    publisher = {Springer New York},
    author = {Sagan, Bruce E.},
    year = {2001},
    doi = {10.1007/978-1-4757-6804-6},
}

@book{greiner_quantum_1989,
    address = {Berlin Heidelberg},
    title = {Quantum {Mechanics}: {Symmetries}},
    isbn = {9783662009024},
    shorttitle = {Quantum {Mechanics}},
    language = {eng},
    publisher = {Springer},
    author = {Greiner, Walter and Müller, Berndt},
    year = {1989},
}

@book{hall_lie_2015,
    address = {Cham},
    series = {Graduate {Texts} in {Mathematics}},
    title = {Lie {Groups}, {Lie} {Algebras}, and {Representations}: {An} {Elementary} {Introduction}},
    volume = {222},
    copyright = {https://www.springernature.com/gp/researchers/text-and-data-mining},
    isbn = {9783319134666 9783319134673},
    shorttitle = {Lie {Groups}, {Lie} {Algebras}, and {Representations}},
    url = {https://link.springer.com/10.1007/978-3-319-13467-3},
    language = {en},
    urldate = {2026-06-04},
    publisher = {Springer International Publishing},
    author = {Hall, Brian C.},
    year = {2015},
    doi = {10.1007/978-3-319-13467-3},
}

@misc{kondor_clebsch-gordan_2018,
    title = {Clebsch-{Gordan} {Nets}: a {Fully} {Fourier} {Space} {Spherical} {Convolutional} {Neural} {Network}},
    copyright = {arXiv.org perpetual, non-exclusive license},
    shorttitle = {Clebsch-{Gordan} {Nets}},
    url = {https://arxiv.org/abs/1806.09231},
    doi = {10.48550/ARXIV.1806.09231},
    urldate = {2026-06-04},
    publisher = {arXiv},
    author = {Kondor, Risi and Lin, Zhen and Trivedi, Shubhendu},
    year = {2018},
}

@article{donkin_clebsch-gordan_2020,
    title = {A {Clebsch}-{Gordan} decomposition in positive characteristic},
    volume = {560},
    issn = {00218693},
    url = {https://linkinghub.elsevier.com/retrieve/pii/S002186932030291X},
    doi = {10.1016/j.jalgebra.2020.06.001},
    language = {en},
    urldate = {2026-06-04},
    journal = {Journal of Algebra},
    author = {Donkin, Stephen and Martin, Samuel},
    month = oct,
    year = {2020},
    pages = {680--699},
}

@misc{tsipidi_harmonic_2025,
    title = {The {Harmonic} {Structure} of {Information} {Contours}},
    copyright = {Creative Commons Attribution 4.0 International},
    url = {https://arxiv.org/abs/2506.03902},
    doi = {10.48550/ARXIV.2506.03902},
    urldate = {2026-06-04},
    publisher = {arXiv},
    author = {Tsipidi, Eleftheria and Kiegeland, Samuel and Nowak, Franz and Xu, Tianyang and Wilcox, Ethan and Warstadt, Alex and Cotterell, Ryan and Giulianelli, Mario},
    year = {2025},
}

@book{sundararajan_fourier_2018,
    address = {Singapore},
    series = {{SpringerLink} {Bücher}},
    title = {Fourier {Analysis}—{A} {Signal} {Processing} {Approach}},
    isbn = {9789811316937},
    language = {eng},
    publisher = {Springer},
    author = {Sundararajan, D.},
    year = {2018},
}

\end{document}